\documentclass[a4paper, leqno,11pt]{amsart}

\usepackage{amssymb, amsmath, amsthm,relsize}
\usepackage{xargs} 
\usepackage{hyperref} 
\usepackage[colorinlistoftodos,textsize=small]{todonotes} 
\usepackage{subcaption,graphicx,xcolor}
\usepackage{forest}

\numberwithin{equation}{section}
\newtheorem{thm}{Theorem}

\newtheorem{corollary}[thm]{Corollary}
\newtheorem{cor}[thm]{Corollary}
\newtheorem{lemma}[thm]{Lemma}
\newtheorem{lem}[thm]{Lemma}

\theoremstyle{remark}

\newtheorem*{theorem*}{Theorem}

\newtheorem{definition}{Definition}
 


\newcommand{\norm}[1]{\left\Vert#1\right\Vert}
\newcommand{\brkt}[1]{\left(#1\right)}

\newcommand{\abs}[1]{\left|#1\right|}

\newcommand{\ip}[1]{\left\langle#1\right\rangle} 


         \newcommand{\B}{\mathbb{B}}

	\newcommand{\ch}{\mathcal{H}}



	\newcommand{\ipf}[1]{\ip{#1}_f}
	\newcommand{\normf}[1]{\norm{#1}_f}
	\newcommand{\rtt}{\frac{\sqrt{2}}{2}}
	\newcommand{\half}{\frac{1}{2}}
 
	\newcommandx{\concern}[2][1=]{\todo[color = red!70!,#1]{Concern: #2}} 
	\newcommandx{\refq}[2][1=]{\todo[color = yellow!40!,#1,]{Reference: #2}} 
	\newcommandx{\wording}[2][1=]{\todo[color = violet!50!,#1,]{Wording: #2}} 
	\newcommandx{\alan}[2][1=]{\todo[color = green!25!,#1]{#2}}
	\newcommandx{\mere}[2][1=]{\todo[color = blue!25!,#1]{#2}}

        \newcommand{\McC}{\raise.5ex\hbox{c}}

\title[Optimal approximants and orthogonal polynomials II]{Optimal approximants and orthogonal polynomials in several variables II: \\families of polynomials in the unit ball}

\author[Sargent]{Meredith Sargent}
\address{Department of Mathematics, University of Arkansas, Fayetteville, AR 72701, U.S.A.}
\email{sargent@uark.edu}

\author[Sola]{Alan Sola}
\address{Department of Mathematics, Stockholm University, 106 91 Stockholm, Sweden}
\email{sola@math.su.se}
\date{\today}

\subjclass[2010]{46E22.}
\keywords{Spaces of holomorphic functions in the unit ball, optimal approximants, orthogonal polynomials}

\begin{document}

\begin{abstract} 
We obtain closed expressions for weighted orthogonal polynomials and optimal approximants associated with the function $f(z)=1-\frac{1}{\sqrt{2}}(z_1+z_2)$ and a scale of Hilbert function spaces in the unit $2$-ball having reproducing kernel $(1-\langle z,w\rangle)^{-\gamma}$, $\gamma>0$. Our arguments are elementary but do not rely on reduction to the one-dimensional case.
 \end{abstract}
\maketitle


%

\section{Introduction}
This note continues recent work in \cite{MSAS1} concerning certain families of polynomials connected with approximation in spaces of analytic functions, and orthogonal polynomials in weighted spaces. In the paper \cite{MSAS1}, we discussed the notion of {\it optimal approximants} to $1/f$ for a holomorphic function $f$ belonging to a Hilbert function space in $\mathbb{C}^n$, and pointed out connections with orthogonal polynomials in certain weighted spaces, with weight determined by the same target function $f$. We presented some elementary examples of optimal approximants and orthogonal polynomials in several variables, and to obtain concrete closed-form representations of these objects, we relied on one-variable results together with suitable transformations. 

In this note, we present a further family of examples of weighted orthogonal polynomials and optimal approximants in several variables. We use a direct, elementary approach to go beyond cases that admit easy reduction to essentially one-variable problems. For simplicity, we focus on two variables, the target function $f=1-\frac{1}{\sqrt{2}}(z_1+z_2)$, and a scale of spaces of functions in the unit ball $\mathbb{B}^2=\{(z_1,z_2)\in\mathbb{C}^2\colon |z_1|^2+|z_2|^2<1\}$,
but some of our arguments potentially extend to higher dimensions, at the price of more cumbersome notation and more involved proofs.

We consider a scale of reproducing kernel Hilbert spaces that have recently featured in work of Richter and Sunkes \cite{RicSun16}. For further background on this kind of spaces, see for instance \cite{Zhu,CSW11,CHZ18} and the references therein. Fix $\gamma>0$ and let $\ch_\gamma$ denote the reproducing kernel Hilbert space in $\mathbb{B}^d$ associated with the reproducing kernel 
\[k_{\gamma}(z;w)=\frac{1}{\brkt{1-\ip{z,w}}^\gamma }, \quad z,w\in\mathbb{B}^d.\] 
The $\mathcal{H}_{\gamma}$ include well-known spaces such as the {\it Drury-Arveson space} ($H^2_d=\ch_1$), the {\it Hardy space} of $\B^2$ ($H^2(\partial \B_d)=\ch_d$), and the {\it Bergman space} of the $2$-ball ($A^2(\B_d)=\ch_{d+1}$).  In two variables, the norm in $\mathcal{H}_{\gamma}$ of an analytic function $f=\sum_{m=0}^\infty \sum_{n=0}^\infty \hat{f}(m,n)z_1^m z_2^n$
can be expressed as
		\begin{equation}
			\norm{f}^2_\gamma =\sum_{m=0}^\infty \sum_{n=0}^\infty a_{m,n}\abs{\hat{f}(m,n)}^2,
		\end{equation}
	where
		\begin{equation}
			a_{m,n}=
			\begin{cases}
				1 & m=n=0,\\
				\frac{m!n!}{(\gamma+m+n-1)\cdots(\gamma+1)\cdot \gamma} & \text{otherwise.}
			\end{cases}
		\end{equation}
We observe that polynomials are dense in all the $\mathcal{H}_{\gamma}$, monomials are orthogonal, and multiplication by the coordinate functions furnish bounded linear operators.		

We now state the definition of optimal approximants; see \cite{Chui80,SecoSurvey,JentZeros19,BetalPrep2,MSAS1} for more comprehensive discussions and references. 
Enumerating the monomials in two variables in some fixed way, we write $\chi_j$ for the $j$th monomial in this ordering, and set 
$\mathcal{P}_n=\mathrm{span}\{\chi_j\colon j=0,\ldots, n\}$. In this note, we work with {\it degree lexicographic order}. Monomials are ordered by increasing total degree, and ties between two monomials of the same total degree are broken lexicographically. See \cite{GWsiam07,GeroJEMS14} and the references therein for background material. Explicitly, we have \[1\prec z_1 \prec z_2 \prec z_1^2\prec  z_1z_2\prec z_2^2\prec z_1^3\prec z_1^2z_2\prec \cdots\,\, ,\] 
so that $\chi_4=z_1z_2$, $\chi_5=z_2^2$, and so on. For pairs of natural numbers $(j,k)$ and $(m,n)$, we will take $(j,k)\prec (m,n)$ to signify that $z_1^jz_2^k\prec z_1^mz_2^n$. 

\begin{definition}[Optimal approximants]
Let $f\in \mathcal{H}_{\gamma}$ be given. We define the $n$th order {\it optimal approximant}  to $1/f$ in $\mathcal{H}_{\gamma}$, relative to $\mathcal{P}_n$, as
$p_n^{\ast}=\mathrm{Proj}_{f\cdot \mathcal{P}_n}[1]/f$,
where $\mathrm{Proj}_{f\cdot \mathcal{P}_n}\colon \mathcal{H}_{\gamma}\to f\cdot \mathcal{P}_n$ is the orthogonal projection onto the closed subspace $f\cdot \mathcal{P}_n \subset \mathcal{H}_{n}$.
\end{definition}
Given some $f\in \mathcal{H}_{\gamma}$, optimal approximants can be viewed as polynomial substitutes for the function $1/f$, the point being that $1/f$ may fall outside of $\mathcal{H}_{\gamma}$. Optimal approximants arise in several contexts, for instance cyclicity problems and filtering theory, see \cite{SecoSurvey,MSAS1}. The papers \cite{FMS14,JAM15,BetalPrep2} discuss some methods for computing optimal approximants, but closed formulas are only known in a few instances. Multi-variable examples have so far only been obtained as a consequence of one-variable results.

\begin{definition}[Weighted orthogonal polynomials]
Let $f\in \mathcal{H}_{\gamma}$ be fixed. We say that a sequence $\{\phi_j\}_{j \in \mathbb{N}} \subset \mathbb{C}[z_1,z_2]$ consists of {\it weighted orthogonal polynomials} with respect to $f$ if 
$\{\phi_j\}$ is an orthogonal basis for the Hilbert space $\mathcal{H}_{\gamma,f}$ with inner product given by $\langle g,h\rangle_{\gamma,f}
\colon =\langle gf,hf \rangle_{\mathcal{H}_{\gamma}}$.
\end{definition}
There is an important connection between optimal approximants and orthogonal polynomials, as is explained in \cite{JLMS16,MSAS1}. Namely, if $\{p_n^*\}$ denote the optimal approximants to $1/f$, $f\in \mathcal{H}_{\gamma}$, and $\{\phi_n\}$ are orthogonal polynomials in the weighted space $\mathcal{H}_{\gamma,f}$, respectively, then
\begin{equation}
p_n^*(z)=\sum_{k=0}^n\langle 1, f\psi_k\rangle_{\mathcal{H}_{\gamma}} \psi_k(z),
\label{OAvsOGformula}
\end{equation}
where $\psi_k=\phi_k/\|\phi_k\|_{\gamma,f}$. This means that if we determine $\{\phi_k\}_k$ explicitly for some given weight $f$, then we also obtain formulas for the optimal approximants to $1/f$. Implementing this strategy in practice in $\mathcal{H}_{\gamma}$ and for the function $f=1-\frac{1}{\sqrt{2}}(z_1+z_2)$ is the goal of this note.

\section{A family of orthogonal polynomials}
We begin with an elementary lemma.
	\begin{lem}\label{lem:winner_monoms}
		Let $f(z_1,z_2)=1-a(z_1+z_2)$ and let $\mathcal{H}$ be a reproducing kernel Hilbert space in which the monomials are orthogonal. Consider $\mathcal{H}_f$, the space weighted by $f$ with inner product $\langle g, h\rangle_{\mathcal{H}_f} :=\langle gf , hf\rangle_{\mathcal{H}}$. For nonnegative integers $j,k,m,n$, we have
				\begin{multline*}
					\ip{z_1^j z_2^k,\,z_1^m z_2^n}_f=\\
						\begin{cases}
							\norm{z_1^j z_2^k}^2 + a^2 \norm{z_1^{j+1}z_2^k}^2 +a^2 \norm{z_1^j z_2^{k+1}}^2 &\quad 
									\mathrm{if}\;\;\parbox{3cm}{$m=j$, $n=k$,}\\	~&~\\
							-a\norm{z_1^j z_2^k}^2 &\quad 
									\mathrm{if}\;\;\parbox{3cm}{$m=j-1$, $n=k$, or \\$m=j$, $n=k-1,$}\\	~&~\\
							-a\norm{z_1^{j+1} z_2^k}^2 &\quad 
									\mathrm{if}\;\;\parbox{3cm}{$m=j+1$, $n=k$,}\\	~&~\\
							-a\norm{z_1^{j} z_2^{k+1}}^2 &\quad 
									\mathrm{if}\;\;\parbox{3cm}{$m=j$, $n=k+1$,}\\	~&~\\
							a^2\norm{z_1^{j+1} z_2^{k}}^2 &\quad 
									\mathrm{if}\;\;\parbox{3cm}{$m=j+1$,\\ $n=k-1$,}\\	~&~\\
							a^2\norm{z_1^{j} z_2^{k+1}}^2 &\quad 
									\mathrm{if}\;\;\parbox{3cm}{$m=j-1$,\\ $n=k+1$,}\\	~&~\\
							0 &\quad \mathrm{otherwise.}
						\end{cases}
				\end{multline*}
	\end{lem}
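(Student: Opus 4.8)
The plan is to expand each factor against $f$, invoke orthogonality of the monomials to collapse the weighted inner product to a finite sum, and then enumerate the finitely many ways two of the resulting monomials can coincide.

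First I would write
\[
	z_1^j z_2^k \cdot f = z_1^j z_2^k - a\, z_1^{j+1} z_2^k - a\, z_1^j z_2^{k+1},
\]
and likewise for $z_1^m z_2^n \cdot f$, so that each product is supported on an ``L''-shaped triple of lattice points, namely $\{(j,k),(j{+}1,k),(j,k{+}1)\}$ and $\{(m,n),(m{+}1,n),(m,n{+}1)\}$, carrying coefficients $1,-a,-a$ respectively. Since the monomials are orthogonal in $\mathcal{H}$, the defining identity $\ip{z_1^j z_2^k,\,z_1^m z_2^n}_f=\ip{z_1^j z_2^k f,\, z_1^m z_2^n f}_{\mathcal H}$ reduces to a sum over pairs of monomials, one drawn from each triple, that occupy the same lattice point; each coincidence contributes the product of the two coefficients times the common squared norm $\norm{z_1^p z_2^q}^2$ (here the coefficients are real, so conjugation is invisible).

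Then I would organize the bookkeeping by the offset $(\delta_1,\delta_2)=(m-j,\,n-k)$. A monomial $z_1^{j+\epsilon_1}z_2^{k+\epsilon_2}$ from the first triple, with $(\epsilon_1,\epsilon_2)\in\{(0,0),(1,0),(0,1)\}$, coincides with a monomial $z_1^{m+\eta_1}z_2^{n+\eta_2}$ from the second (same index set for $(\eta_1,\eta_2)$) precisely when $(\delta_1,\delta_2)=(\epsilon_1-\eta_1,\,\epsilon_2-\eta_2)$. Running through the nine pairs shows that the only offsets producing a coincidence are $(0,0)$, $(\pm1,0)$, $(0,\pm1)$, $(1,-1)$, and $(-1,1)$, while every other offset gives an empty sum and hence the value $0$. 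For each admissible offset I would read off the matching terms and multiply the relevant coefficients: the diagonal $(0,0)$ collects all three self-matches, giving $\norm{z_1^j z_2^k}^2+a^2\norm{z_1^{j+1}z_2^k}^2+a^2\norm{z_1^j z_2^{k+1}}^2$; the offsets $(\pm1,0)$ and $(0,\pm1)$ each contribute a single $1\cdot(-a)$ or $(-a)\cdot1$ match; and the two ``anti-diagonal'' offsets $(1,-1)$ and $(-1,1)$ each contribute a single $(-a)(-a)=a^2$ match.

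There is no serious obstacle; the content is entirely combinatorial bookkeeping. The only points requiring care are confirming that no admissible offset other than $(0,0)$ yields two simultaneous coincidences, so that each nonzero entry is a single term rather than a sum, and keeping the roles of $z_1$ and $z_2$ straight so that the symmetric pairs $\{m=j-1,n=k\}$, $\{m=j,n=k-1\}$ and $\{m=j+1,n=k-1\}$, $\{m=j-1,n=k+1\}$ are matched to the correct squared-norm factors. Both follow immediately from the nine-pair table: apart from $(0,0)$, each admissible offset arises from exactly one pair $\big((\epsilon_1,\epsilon_2),(\eta_1,\eta_2)\big)$, and that pair also fixes the lattice point carrying the norm.
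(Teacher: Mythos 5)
Your proposal is correct and is exactly the argument the paper intends: the paper's proof is the one-line remark that the lemma ``amounts to expanding the inner product and reading off terms,'' and your expansion of $z_1^jz_2^kf$ and $z_1^mz_2^nf$ into the two ``L''-shaped triples, followed by the nine-pair coincidence table organized by the offset $(m-j,n-k)$, is precisely that computation carried out in full. No discrepancies: every admissible offset, coefficient product, and squared-norm factor you list matches the cases in the lemma.
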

\begin{proof}
This amounts to expanding the inner product and reading off terms.
\end{proof}
Recall the standard definition of the {\it Pochhammer symbol} for $\gamma$ real:
\[(\gamma)_n=\gamma\cdot (\gamma+1)\cdots (\gamma+n-1), \quad n\geq 0.\]
	\begin{thm}\label{thm:closed_form}
		In $\ch_\gamma$, weighted by $f(z_1,z_2)=1-\frac{\sqrt{2}}{{2}}\brkt{z_1+z_2}$, let $\phi_{j,k}$ be the first orthogonal polynomial containing $z_1^j z_2^k$ (with respect to degree lexicographic order). Then $\phi_{j,k}$ has the form
		\begin{equation}\label{eq:whichterms}
			\phi_{j,k}(z_1,z_2) = \sum_{m=0}^j \sum_{n=0}^k \hat\phi_{j,k}(m,n) z_1^m z_2^n
		\end{equation}
		where the coefficients $\hat\phi_{j,k}(m,n)$ are given by
		\begin{equation}\label{eq:closedcoeffs}
			\hat\phi_{j,k}(m,n) = \brkt{\frac{\sqrt{2}}{2}}^{j+k-m-n} 
														\frac{(\gamma)_{m+n+1}}{(\gamma)_{j+k+1}} 
														\brkt{ \frac{j!k!}{m!n!} \cdot\frac{\brkt{j+k-m-n}!}{\brkt{j-m}!\brkt{k-n}!} }.
		\end{equation}
		Moreover,
		\begin{align}
			\norm{\phi_{j,k}}^2_f &= \frac{\gamma+j+k+1}{\gamma+j+k}\cdot \frac{j!k!}{(\gamma)_{j+k}}.
			\label{eq:closednorms}
		\end{align}
	\end{thm}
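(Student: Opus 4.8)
The plan is to check that the polynomial $\Phi_{j,k}$ defined by the right-hand sides of \eqref{eq:whichterms}--\eqref{eq:closedcoeffs} is exactly the Gram--Schmidt vector $\phi_{j,k}$, and then to extract the norm from the same computation. Throughout write $a=\tfrac{\sqrt2}{2}$, so that $a^2=\tfrac12$, and abbreviate $c_{m,n}=\hat\phi_{j,k}(m,n)$. Three structural facts reduce the theorem to a single family of orthogonality relations. First, the coefficient at the top monomial is $c_{j,k}=1$. Second, $\Phi_{j,k}$ is supported on the rectangle $\{0\le m\le j,\ 0\le n\le k\}$, whose unique maximal element in degree lexicographic order is $(j,k)$; hence $\Phi_{j,k}\in\mathcal P_N$, where $\chi_N=z_1^jz_2^k$, and all other monomials in its support precede $z_1^jz_2^k$. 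Third, since the monic orthogonal vector produced by Gram--Schmidt at step $N$ is unique, it suffices to verify that $\ipf{\Phi_{j,k},z_1^mz_2^n}=0$ for every $(m,n)\prec(j,k)$; this then forces $\Phi_{j,k}=\phi_{j,k}$.

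The engine of the proof is a reduction of each such relation to one scalar identity. Expanding $\ipf{\Phi_{j,k},z_1^mz_2^n}$ by Lemma~\ref{lem:winner_monoms} and collecting terms according to the three monomial norms $\norm{z_1^mz_2^n}^2$, $\norm{z_1^{m+1}z_2^n}^2$, and $\norm{z_1^mz_2^{n+1}}^2$ that occur, one obtains
\[
\ipf{\Phi_{j,k},z_1^mz_2^n}=\norm{z_1^mz_2^n}^2\,B_0+\norm{z_1^{m+1}z_2^n}^2\,B_1+\norm{z_1^mz_2^{n+1}}^2\,B_2,
\]
with $B_0=c_{m,n}-a\,c_{m-1,n}-a\,c_{m,n-1}$, $B_1=a^2 c_{m,n}-a\,c_{m+1,n}+a^2 c_{m+1,n-1}$, and $B_2=a^2 c_{m,n}-a\,c_{m,n+1}+a^2 c_{m-1,n+1}$. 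Using $\norm{z_1^{m+1}z_2^n}^2/\norm{z_1^mz_2^n}^2=(m+1)/(\gamma+m+n)$ and $\norm{z_1^mz_2^{n+1}}^2/\norm{z_1^mz_2^n}^2=(n+1)/(\gamma+m+n)$, the vanishing of the inner product is equivalent to the identity
\[
(\gamma+m+n)\,B_0+(m+1)\,B_1+(n+1)\,B_2=0,
\]
which I will call $(\star)$.

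The core of the argument is to prove $(\star)$ for every $(m,n)$ in the rectangle other than the corner. Substituting \eqref{eq:closedcoeffs} and factoring out $c_{m,n}$, each of the seven coefficients becomes an explicit rational function of $m$, $n$, $j-m$, $k-n$, and $\gamma+m+n$; after clearing denominators the terms involving $\gamma$ telescope to $-1$, while the remaining terms collapse to $2a^2$, so that $(\star)$ reduces to $2a^2=1$, valid by the choice $a=\tfrac{\sqrt2}{2}$. This bookkeeping with seven Pochhammer and factorial ratios is the main obstacle, and it is the one place where the precise value of $a$ is essential. No division by zero occurs, because $\gamma+m+n>0$ and $(j-m)+(k-n)=0$ only at the excluded corner, and the formula \eqref{eq:closedcoeffs} already returns $0$ for indices outside the rectangle, so the identity applies verbatim on the boundary. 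It remains to treat $(m,n)$ outside the rectangle: if $(m,n)$ is not adjacent to it, all seven coefficients vanish and the inner product is $0$ automatically, while for the adjacent indices $(j+1,n)$ and $(m,k+1)$ the relation $(\star)$ degenerates to a two-term identity between neighbouring coefficients that is checked by a direct computation.

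Finally, the norm requires no new work. Since $\phi_{j,k}$ is orthogonal to every monomial preceding $z_1^jz_2^k$ and is monic there, $\normf{\phi_{j,k}}^2=\ipf{\phi_{j,k},z_1^jz_2^k}$. Evaluating the displayed expansion at $(m,n)=(j,k)$, where the neighbours lying outside the rectangle drop out so that $B_1=a^2$ and $B_2=a^2$, and inserting $c_{j-1,k}=aj/(\gamma+j+k)$ and $c_{j,k-1}=ak/(\gamma+j+k)$, one simplifies directly to \eqref{eq:closednorms}. This reuses precisely the ingredients already assembled for the orthogonality relations.
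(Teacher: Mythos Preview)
Your argument is correct and follows a genuinely different route from the paper's. The paper proceeds by strong induction on the deglex order: assuming the closed form and the norm formula for all $\phi_{J,K}$ with $(J,K)\prec(j,k)$, it runs the Gram--Schmidt step explicitly, observes via Lemma~\ref{lem:winner_monoms} that only $\phi_{j,k-1}$, $\phi_{j-1,k}$, and $\phi_{j+1,k-1}$ can contribute, and then shows that the contribution from $\phi_{j+1,k-1}$ cancels. This yields the two-term recursion
\[
\phi_{j,k}=z_1^jz_2^k+\frac{a}{\gamma+j+k}\bigl(k\,\phi_{j,k-1}+j\,\phi_{j-1,k}\bigr),
\]
from which the coefficient formula and the norm are then extracted. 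You instead write down the candidate $\Phi_{j,k}$ and verify directly, via the single scalar identity $(\star)$, that it is $f$-orthogonal to every preceding monomial; no induction and no recursion are needed. The paper's approach has the merit of isolating the structural reason for the rectangular support (the cancellation \eqref{cancellation}) and of producing Corollary~\ref{cor:recursive} as a free byproduct. Your approach is more economical and reuses the same expansion for both orthogonality and the norm, but the mechanism behind the rectangular support is buried inside the algebraic verification of $(\star)$. One small point: when you say the formula \eqref{eq:closedcoeffs} ``already returns $0$'' outside the rectangle, you are invoking the convention $1/(-r)!=0$ for $r\ge 1$; it would be cleaner to simply declare $c_{m,n}=0$ for $(m,n)$ outside $[0,j]\times[0,k]$ at the outset.
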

	\begin{proof}
		We shall prove this using strong induction. First, $\phi_{0,0}(z_1,z_2)=1$, and by Lemma \ref{lem:winner_monoms}, 
		\begin{align*}
			\norm{\phi_{0,0}}^2_f 
				= \norm{1}^2_f 
				&= \norm{1}^2 + \brkt{\frac{\sqrt{2}}{2}}^2\norm{z}^2 + \brkt{\frac{\sqrt{2}}{2}}^2\norm{z}^2\\
				&= 1 + \frac{1}{2\gamma} + \frac{1}{2\gamma}= \frac{\gamma+1}{\gamma}
		\end{align*} as needed.
		Now consider $\phi_{j,k}$ and assume that for all $(J,K)\prec(j,k)$, the polynomial $\phi_{J,K}$ has the desired form, coefficients, and norm. Using the Gram-Schmidt algorithm,
		\begin{equation}\label{eq:gramschmidt}
			\phi_{j,k}(z_1,z_2)= z_1^jz_2^k - \sum_{(J,K)\prec(j,k)} \frac{ \ipf{z_1^jz_2^k,\,\phi_{J,K}} }{\norm{\phi_{J,K}}_f^2}\phi_{J,K}.
		\end{equation}
		Each $\phi_{J,K}$ has the form \eqref{eq:whichterms}, and by Lemma \ref{lem:winner_monoms},  we see that there are only three $\phi_{J,K}$ with $(J,K)\prec(j,k)$ that give a non-zero inner product: $\phi_{j,k-1}$, $\phi_{j-1,k}$, and $\phi_{j+1,k-1}$. Noting that $\hat\phi_{J,K}(J,K)=1$ and applying Lemma \ref{lem:winner_monoms} gives that
		\begin{align}
			\ipf{z_1^jz_2^k,\,\phi_{j,k-1}} &=\ipf{z_1^jz_2^k,\,z_1^jz_2^{k-1}} = -\rtt \frac{j!k!}{(\gamma+j+k-1)\cdots(\gamma+1)\cdot \gamma}\label{eq:iplessk}\\
			\ipf{z_1^jz_2^k,\,\phi_{j-1,k}} &=\ipf{z_1^jz_2^k,\,z_1^{j-1}z_2^{k}} = -\rtt \frac{j!k!}{(\gamma+j+k-1)\cdots(\gamma+1)\cdot \gamma}\label{eq:iplessj}\\
			\ipf{z_1^jz_2^k,\,\phi_{j+1,k-1}}&=\ipf{z_1^jz_2^k,\, z_1^{j+1}z_2^{k-1} + \hat\phi_{j+1,k-1}(j,k-1)z_1^jz_2^{k-1}}\label{eq:cancels}\\
					&=\ipf{z_1^jz_2^k,\,z_1^{j+1}z_2^{k-1}}\nonumber\\
					&\qquad+ \hat\phi_{j+1,k-1}(j,k-1)\ipf{z_1^jz_2^k,\, z_1^jz_2^{k-1}}.\nonumber
		\end{align}
		The right hand side of \eqref{eq:cancels} is equal to zero: by Lemma \ref{lem:winner_monoms}, 
		\begin{equation}
			\ipf{z_1^jz_2^k,z_1^{j+1}z_2^{k-1}} = \frac{1}{2}\frac{(j+1)!k!}{(\gamma+j+1+k-1)\cdots(\gamma+1)\cdot \gamma},
		\end{equation}
		and by the inductive hypothesis about the norm of $\phi_{j+1,k-1}$ and Lemma \ref{lem:winner_monoms},
		\begin{align}
			\hat\phi_{j+1,k-1}(j,k-1)\ipf{z_1^jz_2^k,z_1^jz_2^{k-1}} 
				&= \rtt \frac{j+1}{\gamma+j+k}\cdot \brkt{-\rtt}\frac{j!k!}{(\gamma+j+k-1)\cdots(\gamma+1)\cdot \gamma}\nonumber\\
				&= -\frac{1}{2}\frac{(j+1)!k!}{(\gamma+j+k)\cdots(\gamma+1)\cdot \gamma}.
				\label{cancellation}
		\end{align}
		Because of this cancellation, which is the key to getting the form the form \eqref{eq:whichterms}, the only preceding orthogonal polynomials that contribute terms to $\phi_{j,k}$ are $\phi_{j,k-1}$ and $\phi_{j-1,k}$, so we have
		\begin{align*}
			\phi_{j,k}(z_1,z_2) &= z_1^jz_2^k - \frac{ \ipf{z_1^jz_2^k,\,\phi_{j,k-1}} }{\norm{\phi_{j,k-1}}_f^2}\phi_{j,k-1} - \frac{ \ipf{z_1^jz_2^k,\,\phi_{j-1,k}} }{\norm{\phi_{j-1,k}}_f^2}\phi_{j-1,k}\nonumber\\
				&= z_1^jz_2^k +\rtt \frac{j!k!}{(\gamma)_{j+k}}\brkt{\frac{1}{\norm{\phi_{j,k-1}}_f^2}\phi_{j,k-1} + \frac{1}{\norm{\phi_{j-1,k}}_f^2}\phi_{j-1,k}}.\nonumber\\
\end{align*}				
Using the inductive hypothesis about the norms and simplifying, we obtain
\begin{align}
				\phi_{j,k}(z_1,z_2)= z_1^jz_2^k+\rtt\frac{1}{\gamma+j+k}\brkt{k\phi_{j,k-1}+j\phi_{j-1,k}}. \label{eq:recursive}
		\end{align}
		This recursive formula can now be used to recover individual coefficients $\hat\phi_{j,k}(m,n)$ using the coefficients $\hat\phi_{j,k-1}(m,n)$ and $\hat\phi_{j-1,k}(m,n)$. We know that $\hat\phi_{j,k}(j,k)=1$, and in the case where $m=j$ (or, similarly, where $n=k$) we have $\hat\phi_{j-1,k}(j,n)=0$ (similarly, $\hat\phi_{j,k-1}(m,k)=0$). Let us first consider the case where $m=j$ and $n=0,1,\dots,k-1$, noting that the case where $n=k$ and $m=0,1,\dots,j-1$ proceeds similarly:
		\begin{align*}
			\hat\phi_{j,k}(j,n) 
			&= \rtt\frac{1}{\gamma+j+k} \brkt{k\hat\phi_{j,k-1}(j,n) +j \hat\phi_{j-1,k}(j,n) } \\
			&= \rtt\frac{1}{\gamma+j+k}\brkt{\frac{\sqrt{2}}{2}}^{j+k-1-j-n} 
					\frac{(\gamma+j+n)\cdots(\gamma+1)\cdot \gamma}{(\gamma+j+k-1)\cdots(\gamma+1)\cdot \gamma}  \\
						&\qquad\cdot\left( 
						k \brkt{\frac{j!(k-1)!}{j!n!} \frac{\brkt{j+k-1-j-n}!}{\brkt{j-j}!\brkt{k-1-n}!}} 						
						\right) \\
			&= \rtt^{k-n}\frac{(\gamma+j+n)\cdots(\gamma+1)\cdot \gamma}{(\gamma+j+k)\cdots(\gamma+1)\cdot \gamma}
						\cdot\frac{k!}{n!}\cdot 
						\frac{\brkt{k-1-n}!}{\brkt{k-1-n}!}	\\
			&= \rtt^{k-n}\frac{(\gamma+j+n)\cdots(\gamma+1)\cdot \gamma}{(\gamma+j+k)\cdots(\gamma+1)\cdot \gamma}
						\cdot\frac{k!}{n!}
		\end{align*}
		and this is what is obtained from substituting $m=j$ in \eqref{eq:closedcoeffs}.

	Now we consider the case where $n<k$ and $m<j$:
	\begin{align*}
			\hat\phi_{j,k}(m,n) 
			&= \rtt\frac{1}{\gamma+j+k} \brkt{k\hat\phi_{j,k-1}(m,n) +j \hat\phi_{j-1,k}(m,n) } \\~\\
			&= \rtt\frac{1}{\gamma+j+k}\brkt{\frac{\sqrt{2}}{2}}^{j+k-1-m-n} 
						\frac{(\gamma+m+n)\cdots(\gamma+1)\cdot \gamma}{(\gamma+j+k-1)\cdots(\gamma+1)\cdot \gamma}  \\
						&\qquad\cdot\left( 
						k \brkt{\frac{j!(k-1)!}{m!n!} \frac{\brkt{j+k-1-m-n}!}{\brkt{j-m}!\brkt{k-1-n}!}}\right.  \\
						&\left.\qquad\qquad+
						j\brkt{\frac{(j-1)!k!}{m!n!} \frac{\brkt{j-1+k-m-n}!}{\brkt{j-1-m}!\brkt{k-n}!}} 						
						\right) \\~\\
			&= \rtt^{j+k-m-n}\frac{(\gamma+m+n)\cdots(\gamma+1)\cdot \gamma}{(\gamma+j+k)\cdots(\gamma+1)\cdot \gamma}
						\cdot\frac{j!k!}{m!n!}  \\
						&\qquad\cdot\left( 
						\frac{\brkt{j+k-1-m-n}!(k-n)+\brkt{j-1+k-m-n}!(j-m)}{\brkt{j-m}!\brkt{k-n}!}						
						\right) \\~\\
			&= \brkt{\frac{\sqrt{2}}{2}}^{j+k-m-n} 
					\frac{(\gamma+m+n)\cdots(\gamma+1)\cdot \gamma}{(\gamma+j+k)\cdots(\gamma+1)\cdot \gamma} \cdot\frac{j!k!}{m!n!} \cdot
					\frac{\brkt{j+k-m-n}!}{\brkt{j-m}!\brkt{k-n}!},
		\end{align*}
		as needed.
		All that remains is to establish \eqref{eq:closednorms}. We use the recursive form \eqref{eq:recursive} and expand the inner product:
		\begin{align*}
			\ipf{\phi_{j,k},\,\phi_{j,k}}
			&= \ipf{z_1^jz_2^k,z_1^jz_2^k} + \rtt \frac{k}{\gamma+j+k}\ipf{z_1^jz_2^k,\phi_{j,k-1}} \\
			&\quad+  \rtt \frac{j}{\gamma+j+k}\ipf{z_1^jz_2^k,\phi_{j-1,k}} + \rtt \frac{k}{\gamma+j+k}\ipf{\phi_{j,k-1},z_1^jz_2^k}\\
			&\quad+ \rtt \frac{j}{\gamma+j+k}\ipf{\phi_{j-1,k},z_1^jz_2^k} +\half\frac{k^2}{(\gamma+j+k)^2}\normf{\phi_{j,k-1}}^2 \\
			&\quad+ \half\frac{kj}{(\gamma+j+k)^2}\ipf{\phi_{j,k-1},\phi_{j-1,k}} +\half\frac{jk}{(\gamma+j+k)^2}\ipf{\phi_{j,k-1},\phi_{j-1,k}}\\
			&\quad+ \half\frac{j^2}{(\gamma+j+k)^2}\normf{\phi_{j-1,k}}^2. 
\end{align*}		
Substituting the inductive values of the norms, \eqref{eq:iplessk}, \eqref{eq:iplessj}, and recalling that the $\phi$ are orthogonal, we obtain
\begin{align*}
			\ipf{\phi_{j,k},\,\phi_{j,k}}&= \frac{j!k!}{(\gamma+j+k-1)\cdots(\gamma+1)\cdot \gamma} \\&\quad+ \half\frac{(j+1)!k!}{(\gamma+j+k)\cdots(\gamma+1)\cdot \gamma}\\&\quad+ \half\frac{j!(k+1)!}{(\gamma+j+k)\cdots(\gamma+1)\cdot \gamma} \\
			&\quad+\sqrt{2} \frac{k}{\gamma+j+k}\brkt{-\rtt \frac{j!k!}{(\gamma+j+k-1)\cdots(\gamma+1)\cdot \gamma}} \\
			&\quad+\sqrt{2} \frac{j}{\gamma+j+k}\brkt{-\rtt \frac{j!k!}{(\gamma+j+k-1)\cdots(\gamma+1)\cdot \gamma}}\\
			&\quad+ \half\frac{k^2}{(\gamma+j+k)^2}\frac{\gamma+j+k}{(\gamma+j+k-1)}\frac{j!(k-1)!}{(\gamma+j+k-1-1)\cdots(\gamma+1)\cdot\gamma)}\\
			&\quad+ \half\frac{j^2}{(\gamma+j+k)^2}\frac{\gamma+j+k}{(\gamma+j+k-1)}\frac{(j-1)!k!}{(\gamma+j-1+k-1)\cdots(\gamma+1)\cdot\gamma)},
			\end{align*}
and simplifying yields
\begin{align*}
		\ipf{\phi_{j,k},\,\phi_{j,k}}	&= \frac{j!k!}{(\gamma+j+k-1)\cdots(\gamma+1)\cdot \gamma} +\frac{j!k!}{(\gamma+j+k)\cdots(\gamma+1)\cdot \gamma}\brkt{\frac{j+1}{2}+\frac{k+1}{2}}\\
			&\quad+ \frac{j!k!}{(\gamma+j+k)\cdots(\gamma+1)\cdot \gamma}\brkt{-k-j} +\frac{j!k!}{(\gamma+j+k)\cdots(\gamma+1)\cdot \gamma}\brkt{\frac{j}{2}+\frac{k}{2}}\\
			&= \frac{j!k!}{(\gamma+j+k)\cdots(\gamma+1)\cdot \gamma}\brkt{\gamma+j+k+\frac{j+1}{2}+\frac{k+1}{2}-j-k+\frac{j}{2}+\frac{k}{2}}\\
			&=\frac{j!k!}{(\gamma+j+k-1)\cdots(\gamma+1)\cdot \gamma}\cdot\frac{\gamma+j+k+1}{\gamma+j+k}.
		\end{align*}
	\end{proof}

	\begin{cor} \label{cor:recursive}
		The orthogonal polynomials given in Theorem \ref{thm:closed_form} can be written recursively as
		\begin{equation*}\label{eq:recursive}
			\phi_{j,k} = z^j w^k + \frac{\sqrt{2}}{2} \frac{1}{\gamma+j+k} \brkt{k\phi_{j,k-1} +j \phi_{j-1,k} }.
		\end{equation*}
	\end{cor}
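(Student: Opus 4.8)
The plan is to observe that there is essentially nothing new to prove: the claimed recursion is literally equation \eqref{eq:recursive}, which already appeared as an intermediate step in the proof of Theorem \ref{thm:closed_form} (modulo the harmless relabelling $z=z_1$, $w=z_2$ in the corollary's statement). I would therefore simply point the reader back to that computation rather than start afresh. Recall that the Gram--Schmidt expansion \eqref{eq:gramschmidt} a priori involves all $\phi_{J,K}$ with $(J,K)\prec(j,k)$, but Lemma \ref{lem:winner_monoms} restricts the nonzero inner products to the three neighbors $\phi_{j,k-1}$, $\phi_{j-1,k}$, and $\phi_{j+1,k-1}$. The key cancellation \eqref{cancellation} then annihilates the $\phi_{j+1,k-1}$ contribution, leaving only the two terms displayed in the corollary; substituting the inner products \eqref{eq:iplessk} and \eqref{eq:iplessj} together with the inductive norm \eqref{eq:closednorms} simplifies the two coefficients to $\tfrac{\sqrt{2}}{2}\tfrac{k}{\gamma+j+k}$ and $\tfrac{\sqrt{2}}{2}\tfrac{j}{\gamma+j+k}$, which is exactly the asserted formula.

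If instead one prefers a proof logically independent of the derivation in Theorem \ref{thm:closed_form} (taking the closed forms \eqref{eq:closedcoeffs} as the \emph{definition} of $\phi_{j,k}$), I would verify the recursion coefficientwise: substitute \eqref{eq:closedcoeffs} into the right-hand side and match the coefficient of $z_1^m z_2^n$ against $\hat\phi_{j,k}(m,n)$. After factoring out the common Pochhammer ratio and the power of $\tfrac{\sqrt{2}}{2}$, this collapses to the elementary identity
\[
(k-n)\brkt{j+k-1-m-n}! + (j-m)\brkt{j+k-1-m-n}! = \brkt{j+k-m-n}!,
\]
valid because $(k-n)+(j-m)=j+k-m-n$. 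This is precisely the simplification already carried out in the $m<j$, $n<k$ case of the theorem. In either approach the only step with genuine content is the cancellation of the $\phi_{j+1,k-1}$ term, and that has already been dispatched; so I anticipate no real obstacle, and the corollary is immediate.
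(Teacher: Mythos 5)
Your proposal is correct and matches the paper's own treatment: the corollary carries no separate proof in the paper, precisely because the claimed recursion is equation \eqref{eq:recursive}, established inside the proof of Theorem \ref{thm:closed_form} via the reduction to the three neighbors $\phi_{j,k-1}$, $\phi_{j-1,k}$, $\phi_{j+1,k-1}$ and the cancellation \eqref{cancellation}, exactly as you describe. Your alternative coefficientwise verification from \eqref{eq:closedcoeffs} is also sound (and logically independent), but the pointer back to the theorem's derivation is the paper's route.
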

\section{A family of optimal approximants}
Making use of the formula \eqref{OAvsOGformula}, we obtain information about optimal approximants to $1/(1-\frac{1}{\sqrt{2}}(z_1+z_2))$. We again set $\psi_{j,k}=\phi_{j,k}/\|\phi_{j,k}\|_{\gamma,f}$.
\begin{lemma}\label{lemma:OAcoeffs}
Let $\gamma>0$ be fixed. Then for $j,k\in \mathbb{N}$,
\[\langle 1, f\psi_{j,k}\rangle_{\gamma}\psi_{j,k}=\frac{\hat{\phi}_{j,k}(0,0)}{\|\phi_{j,k}\|^2}\phi_{j,k}=\left(\frac{\sqrt{2}}{2}\right)^{j+k}\frac{(j+k)!}{j!k!}\frac{\gamma}{\gamma+j+k+1}\phi_{j,k}.\]
\end{lemma}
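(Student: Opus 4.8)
The plan is to unfold the normalization $\psi_{j,k}=\phi_{j,k}/\normf{\phi_{j,k}}$ and reduce the entire statement to the two explicit quantities already furnished by Theorem \ref{thm:closed_form}: the constant coefficient $\hat\phi_{j,k}(0,0)$ and the weighted norm $\normf{\phi_{j,k}}^2$. Since $\psi_{j,k}$ differs from $\phi_{j,k}$ only by the scalar $\normf{\phi_{j,k}}$, pulling this constant out of both the inner product and the polynomial gives
\[
\ip{1,\,f\psi_{j,k}}_\gamma\,\psi_{j,k}=\frac{\ip{1,\,f\phi_{j,k}}_\gamma}{\normf{\phi_{j,k}}^2}\,\phi_{j,k},
\]
so the first claimed equality amounts to the identity $\ip{1,\,f\phi_{j,k}}_\gamma=\hat\phi_{j,k}(0,0)$.

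First I would establish that identity. Because the monomials are orthogonal in $\ch_\gamma$ and $\norm{1}_\gamma^2=a_{0,0}=1$, pairing any analytic $g$ against the constant function $1$ simply extracts the conjugate of its constant Taylor coefficient, i.e. $\ip{1,g}_\gamma=\overline{\hat g(0,0)}$. Applying this to $g=f\phi_{j,k}$, I would observe that since $f=1-\rtt(z_1+z_2)$ has constant term $1$ while the two correction terms $-\rtt z_1$ and $-\rtt z_2$ strictly raise the bidegree, the only contribution to the constant coefficient of $f\phi_{j,k}$ comes from $1\cdot\hat\phi_{j,k}(0,0)$. As all coefficients involved are real, this yields $\ip{1,\,f\phi_{j,k}}_\gamma=\hat\phi_{j,k}(0,0)$ and hence the first equality.

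For the second equality I would substitute the closed forms. Setting $m=n=0$ in \eqref{eq:closedcoeffs} and using $(\gamma)_1=\gamma$ together with $\frac{j!k!}{0!0!}\cdot\frac{(j+k)!}{j!k!}=(j+k)!$ gives $\hat\phi_{j,k}(0,0)=\brkt{\rtt}^{j+k}\frac{\gamma\,(j+k)!}{(\gamma)_{j+k+1}}$, while \eqref{eq:closednorms} gives $\normf{\phi_{j,k}}^2=\frac{\gamma+j+k+1}{\gamma+j+k}\cdot\frac{j!k!}{(\gamma)_{j+k}}$. Forming the ratio and invoking the Pochhammer recursion $(\gamma)_{j+k+1}=(\gamma)_{j+k}(\gamma+j+k)$ collapses the Pochhammer symbols to a single factor $\frac{1}{\gamma+j+k}$; the resulting factor $\gamma+j+k$ then cancels the one coming from the norm, leaving exactly $\brkt{\rtt}^{j+k}\frac{(j+k)!}{j!k!}\cdot\frac{\gamma}{\gamma+j+k+1}$, as claimed.

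I do not expect a genuine obstacle here: the one step requiring any thought is the constant-coefficient extraction $\ip{1,\,f\phi_{j,k}}_\gamma=\hat\phi_{j,k}(0,0)$, which rests on orthogonality of monomials and the triangular (degree-raising) structure of $f$; once that is in place, the remainder is routine Pochhammer bookkeeping.
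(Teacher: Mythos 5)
Your proposal is correct and follows essentially the same route as the paper: both reduce $\langle 1, f\psi_{j,k}\rangle_{\gamma}\psi_{j,k}$ to $\hat\phi_{j,k}(0,0)\phi_{j,k}/\|\phi_{j,k}\|_{\gamma,f}^2$ via the observation that pairing against $1$ extracts the (real) constant coefficient of $f\phi_{j,k}$, which equals $\hat\phi_{j,k}(0,0)$ since $f(0)=1$, and then substitute the closed forms \eqref{eq:closedcoeffs} and \eqref{eq:closednorms} with the Pochhammer recursion $(\gamma)_{j+k+1}=(\gamma)_{j+k}(\gamma+j+k)$. Your reading of $\|\phi_{j,k}\|^2$ as the weighted $f$-norm is also the intended one (the paper's subscript $\gamma$ there is a typo), so there is nothing to correct.
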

\begin{proof}
From the power series expression for the norm in $\mathcal{H}_{\gamma}$, we have $\langle 1, f\psi_{j,k}\rangle_{\gamma}=\overline{(f\psi_{j,k})}(0)=\overline{\psi}_{j,k}(0,0)=\overline{\hat{\psi}_{j,k}(0,0)}$, and by definition, $\hat{\psi}_{j,k}(0,0)=\hat{\phi}_{j,k}(0,0)/\|\phi_{j,k}\|_{\gamma,f}$ which is real by \eqref{eq:closedcoeffs}.

It remains to compute
\[\phi_{j,k}(0,0)=\left(\frac{\sqrt{2}}{2}\right)^{j+k}\frac{\gamma}{(\gamma)_{j+k+1}}(j+k)!\]
and, simplifying, we obtain
\[\frac{\phi_{j,k}(0,0)}{\|\phi_{j,k}\|_{\gamma}^2}=\left(\frac{\sqrt{2}}{2}\right)^{j+k}\frac{(j+k)!}{j!k!}\frac{\gamma}{\gamma+j+k+1}.\]
\end{proof}
Setting $\Phi_{j,k}=\sum_{n=0}^j\sum_{m=0}^k\hat{\Phi}_{j,k}(m,n)z_1^mz_2^n$
where
\begin{multline}
\hat{\Phi}_{j,k}(m,n)\\=\left(\frac{\sqrt{2}}{2}\right)^{2(j+k)-m-n}\gamma \frac{(j+k)!}{m!n!}\frac{(\gamma)_{m+n+1}}{(\gamma)_{j+k+2}}\frac{(j+k-m-n)!}{(j-m)!(k-n)!}
\end{multline}
a representation formula for optimal approximants follows from Lemma \ref{lemma:OAcoeffs}:
\begin{thm}
For $\gamma>0$ fixed, we have
\[p_n^*(z_1,z_2)=\sum_{(j,k)\preceq (n_1,n_2)}\Phi_{j,k}(z_1,z_2)\]
where $(n_1,n_2)$ is the bidegree of the polynomial $p_n^*$.
\end{thm}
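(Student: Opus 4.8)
The plan is to derive the formula directly from the connection \eqref{OAvsOGformula} between optimal approximants and the normalized orthogonal polynomials $\psi_{j,k}$. Recall that $p_n^*$ is the projection of $1$ onto $f\cdot\mathcal{P}_n$ divided by $f$; since $\{\psi_{j,k}\}$ is an orthonormal basis for $\mathcal{H}_{\gamma,f}$ (weighted by $f$), the projection of $1/f$ onto $\mathcal{P}_n$ in the $\langle\cdot,\cdot\rangle_{\gamma,f}$ inner product is obtained by summing the orthonormal expansion over exactly those indices $(j,k)$ whose monomials $z_1^jz_2^k$ lie in $\mathcal{P}_n$. The first step, therefore, is to identify which index set this is: by the definition of degree lexicographic order and the fact that $p_n^*$ is defined relative to $\mathcal{P}_n=\mathrm{span}\{\chi_0,\dots,\chi_n\}$, the relevant indices are precisely those $(j,k)$ with $(j,k)\preceq(n_1,n_2)$, where $(n_1,n_2)$ is the bidegree of the top monomial $\chi_n$. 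This is essentially a bookkeeping identification of the enumeration with the pair notation introduced in the preamble.

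Next I would invoke Lemma \ref{lemma:OAcoeffs}, which computes each summand $\langle 1, f\psi_{j,k}\rangle_\gamma\,\psi_{j,k}$ of \eqref{OAvsOGformula} explicitly. The lemma gives
\[
\langle 1, f\psi_{j,k}\rangle_\gamma\,\psi_{j,k}
=\left(\frac{\sqrt{2}}{2}\right)^{j+k}\frac{(j+k)!}{j!k!}\frac{\gamma}{\gamma+j+k+1}\,\phi_{j,k}.
\]
The claim to verify is that this scalar multiple of $\phi_{j,k}$ is exactly the polynomial $\Phi_{j,k}$ defined just above the theorem. To see this, I would read off the coefficients: multiplying the closed form \eqref{eq:closedcoeffs} for $\hat\phi_{j,k}(m,n)$ by the prefactor above and collecting the powers of $\sqrt{2}/2$ (namely $(j+k)+(j+k-m-n)=2(j+k)-m-n$) and the Pochhammer factors ($(\gamma)_{m+n+1}/(\gamma)_{j+k+1}$ times $\gamma/(\gamma+j+k+1)$, which combines to $\gamma\,(\gamma)_{m+n+1}/(\gamma)_{j+k+2}$) should reproduce $\hat\Phi_{j,k}(m,n)$ verbatim. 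This is a routine algebraic match against the displayed definition of $\hat\Phi_{j,k}$, so I would state it as a direct substitution rather than grind through it.

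Having matched each term, the theorem follows immediately by summing \eqref{OAvsOGformula} over the identified index set:
\[
p_n^*(z_1,z_2)=\sum_{(j,k)\preceq(n_1,n_2)}\langle 1, f\psi_{j,k}\rangle_\gamma\,\psi_{j,k}
=\sum_{(j,k)\preceq(n_1,n_2)}\Phi_{j,k}(z_1,z_2).
\]
The genuinely substantive input is already contained in Theorem \ref{thm:closed_form} and Lemma \ref{lemma:OAcoeffs}; the only real care needed here is the first step, matching the abstract span $\mathcal{P}_n$ against the pair-indexed sum. The main obstacle, such as it is, is making sure the index set in \eqref{OAvsOGformula} is translated correctly into the condition $(j,k)\preceq(n_1,n_2)$ and that the bidegree of $p_n^*$ coincides with the bidegree $(n_1,n_2)$ of the highest monomial $\chi_n$ spanning $\mathcal{P}_n$; once this identification is pinned down, the rest is a clean summation of the already-computed closed forms.
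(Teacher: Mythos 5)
Your proposal is correct and takes essentially the same route as the paper, whose own proof is left almost entirely implicit: the theorem is stated as following directly from \eqref{OAvsOGformula}, Lemma \ref{lemma:OAcoeffs}, and the definition of $\Phi_{j,k}$. Your coefficient matching---combining the powers of $\sqrt{2}/2$ into the exponent $2(j+k)-m-n$ and the Pochhammer factors via $(\gamma+j+k+1)\,(\gamma)_{j+k+1}=(\gamma)_{j+k+2}$---is precisely the substitution the paper leaves to the reader, and your identification of the index set with $(j,k)\preceq(n_1,n_2)$ matches the paper's bookkeeping.
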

Explicitly, then,
\[p_0^*=\Phi_{0,0}, \quad p_1^*=\Phi_{0,0}+\Phi_{1,0}, \quad p_2^*=\Phi_{0,0}+\Phi_{1,0}+\Phi_{0,1},\]
\[p_3^*=\Phi_{0,0}+\Phi_{1,0}+\Phi_{0,1}+\Phi_{2,0}\quad p_4^*=\Phi_{0,0}+\Phi_{1,0}+\Phi_{0,1}+\Phi_{2,0}+\Phi_{1,1},\]
and so on. Some $p_n^*$'s for $\gamma=1$ (the Drury-Arveson space) are written out in \cite[Section 6.1]{MSAS1}. The first few optimal approximants for the Hardy space $H^2(\mathbb{B}^2)$ ($\gamma=2)$ are as follows:
\[p_0^*=\frac{2}{3} ,\quad p_1^*=\frac{3}{4}+\frac{1}{4}\sqrt{2}z_1,\quad p_2^*=\frac{5}{6}+\frac{\sqrt{2}}{4}(z_1+z_2),\quad p^*_3=\frac{17}{20}+\frac{3\sqrt{2}}{10}z_1+\frac{\sqrt{2}}{4}+\frac{1}{5}z_1^2,\]
\[p_4^*=\frac{53}{60}+\frac{7\sqrt{2}}{20}z_1+\frac{3\sqrt{2}}{10}+\frac{1}{5}z_1^2+\frac{2}{5}z_1z_2,\]
while the first optimal approximants in the Bergman space $A^2(\mathbb{B}^2)$ ($\gamma=3$) have the form
\[p_0^*=\frac{3}{4} ,\, p_1^*=\frac{33}{40}+\frac{3\sqrt{2}}{10}z_1,\, p_2^*=\frac{9}{10}+\frac{3\sqrt{2}}{10}(z_1+z_2),\, p^*_3=\frac{73}{80}+\frac{7\sqrt{2}}{20}z_1+\frac{3\sqrt{2}}{10}z_2+\frac{1}{4}z_1^2, \]
\[p_4^*=\frac{15}{16}+\frac{2\sqrt{2}}{5}z_1+\frac{7\sqrt{2}}{20}z_2+\frac{1}{4}z_1^2+\frac{1}{2}z_1z_2.\]
The symmetric form of $p_2^*$ above is explained in \cite[Section 6]{MSAS1}.

\section{An application}
Our results can be applied to study the cyclicity properties of the function $f=1-\frac{1}{\sqrt{2}}(z_1+z_2)$. Recall that $f$ is said to be {\it cyclic} in $\mathcal{H}_{\gamma}$ if the closure of the invariant subspace $\mathrm{span}\{z_1^jz_2^kf\colon j,k \in \mathbb{N} \}$ equals $\mathcal{H}_{\gamma}$.

Define the {\it optimal distance} $\nu^2_{n}(f,\mathcal{H}_{\gamma})=\|p_n^*f-1\|_{\mathcal{H}_{\gamma}}^2$: then $f$ is cyclic if and only if $\nu_{n}(f,\mathcal{H}_{\gamma}) \to 0$ as $n\to \infty$. 
Combining \cite[Corollary 5.3]{JLMS16} with our explicit formulas, we obtain the following.
\begin{corollary}
We have
\[\nu^2_{n}(f,\mathcal{H}_{\gamma})=1-\gamma^2\sum_{(j,k)\prec (n_1,n_2)}2^{-(j+k)}\frac{(j+k)!}{(\gamma)_{j+k+2}}\left(\begin{array}{c}j+k\\j\end{array}\right),\]
where $(n_1,n_2)$ is the bidegree of $p_n^*$. 
\end{corollary}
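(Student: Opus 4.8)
The plan is to combine the abstract optimal-distance identity for projections with the explicit data already assembled in Theorem \ref{thm:closed_form} and Lemma \ref{lemma:OAcoeffs}. Since $p_n^*f=\mathrm{Proj}_{f\cdot \mathcal{P}_n}[1]$, the residual $1-p_n^*f$ is orthogonal to $f\cdot \mathcal{P}_n$, and the normalized polynomials $\{f\psi_{j,k}\}$ form an orthonormal basis of $f\cdot \mathcal{P}_n$ inside $\ch_\gamma$ (because $\langle f\psi_{j,k},f\psi_{m,n}\rangle_\gamma=\langle \psi_{j,k},\psi_{m,n}\rangle_{\gamma,f}=\delta$). By Pythagoras,
\[\nu^2_{n}(f,\ch_\gamma)=\norm{1-p_n^*f}^2_{\ch_\gamma}=1-\sum\big|\langle 1,\,f\psi_{j,k}\rangle_\gamma\big|^2,\]
the sum running over the index pairs $(j,k)$ corresponding to the monomials spanning $\mathcal{P}_n$. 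This is precisely the reduction recorded in \cite[Corollary 5.3]{JLMS16}, which I would invoke directly after checking that its standing hypotheses (orthonormality of $\{f\psi_{j,k}\}$ and density of polynomials in $\ch_\gamma$) hold in our setting.

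With this reduction in hand, the remaining work is a bookkeeping computation of each coefficient. First I would read off from Lemma \ref{lemma:OAcoeffs} the identity $\langle 1,f\psi_{j,k}\rangle_\gamma=\hat\phi_{j,k}(0,0)/\normf{\phi_{j,k}}$, so that
\[\big|\langle 1,\,f\psi_{j,k}\rangle_\gamma\big|^2=\frac{\hat\phi_{j,k}(0,0)^2}{\normf{\phi_{j,k}}^2}.\]
Then I would substitute the two closed forms from Theorem \ref{thm:closed_form}: setting $m=n=0$ in \eqref{eq:closedcoeffs} gives $\hat\phi_{j,k}(0,0)^2=2^{-(j+k)}\gamma^2\big((j+k)!\big)^2/(\gamma)_{j+k+1}^2$, while \eqref{eq:closednorms} gives $\normf{\phi_{j,k}}^2=\frac{\gamma+j+k+1}{\gamma+j+k}\cdot\frac{j!k!}{(\gamma)_{j+k}}$.

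The simplification then proceeds through two elementary Pochhammer relations, $(\gamma)_{j+k+1}=(\gamma)_{j+k}(\gamma+j+k)$ and $(\gamma)_{j+k+2}=(\gamma)_{j+k}(\gamma+j+k)(\gamma+j+k+1)$, together with $\big((j+k)!\big)^2/(j!k!)=(j+k)!\binom{j+k}{j}$. Carrying out the division telescopes the quotient down to
\[\big|\langle 1,\,f\psi_{j,k}\rangle_\gamma\big|^2=\gamma^2\,2^{-(j+k)}\,\frac{(j+k)!}{(\gamma)_{j+k+2}}\binom{j+k}{j},\]
which is exactly the summand in the statement. Summing over the relevant $(j,k)$ and subtracting from $1$ produces the claimed formula.

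I do not expect a substantive obstacle: the argument is the standard projection identity followed by a finite manipulation of factorials and Pochhammer symbols. The two places that warrant care are (i) confirming that the distance identity of \cite{JLMS16} applies verbatim, with $\{f\psi_{j,k}\}$ playing the role of the orthonormal system; and (ii) aligning the range of summation with the bidegree $(n_1,n_2)$ of $p_n^*$, so that the index set in the displayed sum matches exactly the monomials defining $\mathcal{P}_n$ under degree lexicographic order.
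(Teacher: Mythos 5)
Your proposal is correct and follows essentially the same route as the paper, which likewise obtains the formula by combining \cite[Corollary 5.3]{JLMS16} (the Pythagorean/Parseval identity for the projection onto $f\cdot\mathcal{P}_n$ with orthonormal system $\{f\psi_{j,k}\}$) with the closed forms \eqref{eq:closedcoeffs} and \eqref{eq:closednorms} from Theorem \ref{thm:closed_form} via Lemma \ref{lemma:OAcoeffs}; your Pochhammer simplification reproducing the summand $\gamma^2 2^{-(j+k)}(j+k)!\binom{j+k}{j}/(\gamma)_{j+k+2}$ checks out. The only point of friction is the one you flagged yourself: the index range must match the monomials spanning $\mathcal{P}_n$ (note the paper's own notation wavers between $\prec$ and $\preceq$ in the Theorem and the Corollary), but this is a bookkeeping matter, not a gap in your argument.
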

The function $f$ was already known to be cyclic in all $\mathcal{H}_{\gamma}$, but the above gives a precise description of how quickly the finite-dimensional subspaces $f\cdot \mathcal{P}_n$ fill up $\mathcal{H}_{\gamma}$. (The trick used to prove \cite[Proposition 23]{MSAS1} combined with \cite[Proposition 3.10]{FMS14} applied to the weight sequence 
$\omega(k)=k!/(\gamma)_k\asymp k^{\gamma-1}$ shows that the optimal distances have power law decay with exponent $-\gamma$.)

\section{Closing remarks}
As was highlighted in the course of the proof of Theorem \ref{thm:closed_form}, the cancellation in \eqref{cancellation} simplifies the structure of the orthogonal polynomials, giving rise to a relatively simple recursive relation that in turn allows us to write down an explicit formula for their coefficients; this phenomenon of course reflects the fact that the target function $f=1-\frac{1}{\sqrt{2}}(z_1+z_2)$ is well-adapted to the structure of $\mathcal{H}_{\gamma}$ (viz. also \cite[Proposition 23]{MSAS1}). 

In \cite{MSAS1}, optimal approximants to $1/f$ for the similar function $f=1-\frac{1}{2}(z_1+z_2)$ were examined for the family of Dirichlet-type spaces $\mathfrak{D}_{\alpha}$ on the bidisk 
$\mathbb{D}^2=\{(z_1,z_2)\in \mathbb{C}^2\colon |z_1|<1, |z_2|<1\}$, as were the corresponding orthogonal polynomials. While an analog of Lemma \ref{lem:winner_monoms} holds in that setting, cancellation fails for the orthogonal polynomials. Indeed, as is pointed out in \cite[Section 6]{MSAS1}, coefficients appearing in the orthogonal polynomials and optimal approximants in $\mathfrak{D}_{\alpha}$ in the bidisk exhibit sign changes and other complications, suggesting that obtaining a closed formula as well as precise estimates on optimal distances might be a harder problem than for the ball.

Returning to $\mathbb{B}^2$, we note that an analog of Lemma \ref{lem:winner_monoms} for the target function $g=\left(1-\frac{1}{\sqrt{2}}(z_1+z_2)\right)^2$, and indeed for other powers of $f$, is readily obtained. One can then proceed as we have done here in order to analyze orthogonal polynomials associated with the weight $g$. The computations quickly become more involved, but in principle one could attempt to obtain a recursive relation analogous to that in Corollary \ref{cor:recursive}, and then extract a closed formula for coefficients of orthogonal polynomials. As a sample, we invite the reader to verify that for $\gamma=1$ (the Drury-Arveson space), the orthogonal polynomials for the weight $g=f^2$ satisfy the relation
\begin{multline}
\phi_{j,k}=z_1^jz_2^k+\frac{\sqrt{2}}{j+k+2}(k\phi_{j,k-1}+j\phi_{j-1,k})\\-\frac{1}{(j+k+1)(j+k+2)}\left(\frac{k(k-1)}{2}\phi_{j,k-2}+jk\phi_{j-1,k-1}+\frac{j(j-1)}{2}\phi_{j-2,k}\right).
\end{multline}

\bibliography{bidisk_OG_OA.bib} 
\bibliographystyle{acm}

\end{document}